\documentclass[12pt,a4paper]{amsart}
\usepackage[T1]{fontenc}
\usepackage{lmodern,amsmath,amssymb,mathtools}
\usepackage[expansion=false]{microtype}
\usepackage[margin=26mm]{geometry}
\usepackage[hidelinks]{hyperref}
\hypersetup{pdftitle={A positivity conjecture of Prellberg and Stanton},pdfauthor={Thomas Prellberg},pdfsubject={Positivity of Gaussian polynomials and orbit polynomials}}
\newtheorem{theorem}{Theorem}
\newtheorem{lemma}{Lemma}
\newtheorem{corollary}{Corollary}
\newtheorem{problem}{Problem}
\newcommand{\N}{\mathbb N}
\newcommand{\Z}{\mathbb Z}
\newcommand{\Q}{\mathbb Q}
\newcommand{\qb}[3][q]{\genfrac{[}{]}{0pt}{}{#2}{#3}_{#1}}
\newcommand{\Bcal}{\mathcal B}
\newcommand{\pos}{\mathrel{\ge_q}}
\DeclareMathOperator{\lcm}{lcm}
\title{A positivity conjecture of Prellberg and Stanton}
\author{Thomas Prellberg}
\address{School of Mathematical Sciences, Queen Mary University of London, London E1 4NS, United Kingdom}
\email{t.prellberg@qmul.ac.uk}
\subjclass[2020]{Primary 05A17; Secondary 05A30, 13A50}
\keywords{Gaussian polynomial, integer partition, semi-invariant, Singer cycle}
\date{\today}
\begin{document}
\begin{abstract}
We prove the conjecture of Prellberg and Stanton (2003) that
$\frac{1-q}{(q^n;q)_m}\qb nm$ has nonnegative power-series coefficients for $0<m<n$,
except when $n$ is even and $m\in\{2,n-2\}$.
We obtain a polynomial version by bounding a M\"obius correction by
first differences of Gaussian coefficients. The proof uses partitions
and quadratic and cubic semi-invariants of binary forms.
An extension of this bound proves the Reiner--Stanton--White conjecture
on positivity of orbit polynomials for flags of subspaces.
\end{abstract}
\maketitle

\section{Introduction}
The purpose of this paper is to prove a conjecture of Prellberg and
Stanton~\cite[Conjecture~1]{PS} on the monotonicity of partition counts.
The same method proves the conjecture of Reiner, Stanton and
White~\cite[Conjecture~10.3]{RSW} on orbit polynomials for flags of subspaces.

We use the notation $[r]_q=(1-q^r)/(1-q)$,
$(x;q)_r=\prod_{j=0}^{r-1}(1-xq^j)$, and
$\qb nr=(q;q)_n/((q;q)_r(q;q)_{n-r})$.
Throughout, $\N=\{0,1,2,\ldots\}$ and $P\pos Q$ means that $P-Q$ has
nonnegative coefficients.

Prellberg and Stanton~\cite{PS} proved that, for odd $n\ge3$, the number
of partitions of $N$ with parts in $\{n,n+1,\ldots,2n-1\}$ is
nondecreasing for $N\ge1$. This continued the study of partition
monotonicity in Friedman, Joichi and Stanton~\cite{FJS}.
Prellberg and Stanton also conjectured that
\[
 F_{n,m}(q)=\frac{1-q}{(q^n;q)_m}\qb nm
\]
has nonnegative coefficients for $0<m<n$, except when $n$ is even and
$m\in\{2,n-2\}$.

Here is the partition interpretation. Let $a_{n,m}(N)$ count partitions
of $N$ with parts in $\{1,\ldots,n-m\}\cup\{n,\ldots,n+m-1\}$, using
at most $m$ parts from the first interval. The small parts fit in an
$m\times(n-m)$ rectangle; the large parts have unrestricted multiplicities.
Thus
\[
 \sum_{N\ge0}a_{n,m}(N)q^N=\frac{\qb nm}{(q^n;q)_m},\qquad
 [q^N]F_{n,m}=a_{n,m}(N)-a_{n,m}(N-1),
\]
where $a_{n,m}(-1)=0$. The conjecture says that these partition counts
are nondecreasing. We prove the following polynomial version.

\begin{theorem}\label{thm}
For $0<m<n$, except when $n$ is even and $m\in\{2,n-2\}$, the polynomial
\[
 R_{n,m}(q)=\frac{[m]_{q^{n+1}}}{[n]_q}\qb nm
 =\qb{n-1}{m-1}\frac{[m]_{q^{n+1}}}{[m]_q}
\]
has nonnegative integer coefficients. Consequently $F_{n,m}\in\N[[q]]$ for all these pairs.
\end{theorem}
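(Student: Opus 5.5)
The plan has three parts: first deduce $F_{n,m}\in\N[[q]]$ from $R_{n,m}\in\N[q]$, then prove the polynomial statement by a M\"obius expansion over divisors, and finally bound the correction terms.

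\textbf{From $R_{n,m}$ to $F_{n,m}$.} Write
\[
\frac{1-q}{(q^n;q)_m}\qb nm
=\frac{(1-q)[n]_q}{[m]_{q^{n+1}}(q^n;q)_m}\,R_{n,m}(q).
\]
We have $(1-q)[n]_q=1-q^n$ and $[m]_{q^{n+1}}=(1-q^{(n+1)m})/(1-q^{n+1})$. Hence the prefactor is
\[
\frac{(1-q^n)(1-q^{n+1})}{(1-q^{m(n+1)})\prod_{j=0}^{m-1}(1-q^{n+j})}.
\]
For $m\ge2$ the factors $1-q^n$ and $1-q^{n+1}$ cancel against the product. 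What remains is $1/\bigl((1-q^{m(n+1)})\prod_{j=2}^{m-1}(1-q^{n+j})\bigr)$, which lies in $\N[[q]]$. For $m=1$ one checks $F_{n,1}=1$ directly. So the second claim follows from the first.

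\textbf{Integrality of $R_{n,m}$.} I would use the standard cyclotomic count. $\Phi_d$ divides $\qb{n-1}{m-1}$ with multiplicity $\lfloor (n-1)/d\rfloor-\lfloor (m-1)/d\rfloor-\lfloor (n-m)/d\rfloor\in\{0,1\}$. In the quotient $[m]_{q^{n+1}}/[m]_q$, $\Phi_d$ appears with multiplicity $[d\mid m(n+1),\,d\nmid n+1]-[d\mid m]$. A divisor $d\mid m$ with $d>1$ does not divide $n+1$ exactly when the binomial count gives $1$. So integrality is routine.

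\textbf{Nonnegativity.} I would split $[m]_{q^{n+1}}/[m]_q$ by M\"obius inversion over divisors $d\mid m$, writing $R_{n,m}$ as a main term plus a correction. The main term should be $\qb{n-1}{m-1}$ times something visibly positive. The natural choice comes from
\[
\frac{1-q^{m(n+1)}}{1-q^m}=\sum_{k=0}^{n}q^{mk},
\]
together with
\[
\frac{(1-q^m)}{(1-q)(1-q^{n+1})}\cdot(\text{Gaussian}).
\]
Concretely, I would write
\[
R_{n,m}=\qb{n}{m}\,\frac{[m]_{q^{n+1}}}{[n]_q},
\]
and for $\gcd(m,n)=1$ view $R_{n,m}$ as the Singer-cycle orbit count, in the spirit of Reiner--Stanton--White. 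That orbit count is a sum of $q$-Gaussians weighted by M\"obius values over common divisors.

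The correction terms, indexed by $d\mid\gcd(m,n)$ or $d\mid\gcd(m,n+1)$, take the form
\[
\pm\,\qb{n/d}{m/d}_{q^d}\cdot(\text{small factor}).
\]
I would bound them by first differences
\[
\qb{n}{k}-\qb{n}{k-1}\pos0,
\]
valid for $k\le n/2$ by unimodality, via Sylvester's theorem. That is, show that each negative M\"obius term is dominated coefficientwise by a first difference of Gaussian coefficients contained in the main term.

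The main obstacle is this domination for small $m$, namely $m=2,3$ and symmetrically $m=n-2,n-3$. There the slack is smallest and the exception for even $n$, $m=2$ shows the bound is sharp. For these cases I would compute explicitly using quadratic and cubic semi-invariants of binary forms. The first difference $\qb{n}{k}-\qb{n}{k-1}$ counts semi-invariants of degree $k$ in binary forms of degree $n-k$, or equivalently partitions with a rank-type condition. For degree two and three these semi-invariants admit explicit positive descriptions by Cayley--Sylvester. I would verify the required inequality by a direct partition injection in these cases. General $m$ would then follow by the same injection applied to the largest proper divisor term, since the remaining terms decay geometrically in $q$-degree.
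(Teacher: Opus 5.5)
Your reduction from $R_{n,m}$ to $F_{n,m}$ is exactly the paper's identity~\eqref{RF}. The cyclotomic integrality check works, though the condition should read: for $d\mid m$, $d>1$, the Gaussian $\qb{n-1}{m-1}$ contains $\Phi_d$ iff $d\nmid n$, and $d\mid n+1$ forces this. The coprime case is also fine.

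\textbf{The gap.} The nonnegativity when $g=\gcd(n,m)>1$ is the whole content of the theorem, and there your plan specifies neither a decomposition nor a mechanism. The paper writes $R_{n,m}=[m]_{q^{C}}H+[C]_{q^m}A$, with $C=n+1$, $G=\qb{n-1}{m-1}$, $A=-\sum_{d\mid g,\,d>1}\mu(d)\qb[q^d]{n/d-1}{m/d-1}$ and $H=(G-[C]_qA)/[m]_q$. Here $A$ is nonnegative as the generating function of a union of subset families, so everything reduces to $H\in\N[q]$.

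Crucially, a coefficientwise domination of the correction does not survive division by $[m]_q$: for example $1+q^3=(1+q)(1-q+q^2)$. The missing idea is that a nonnegative symmetric unimodal polynomial divisible by $[r]_{q^\tau}$ has a nonnegative quotient. The paper manufactures such a polynomial by peeling the correction off the interval expansion of $G$:
$P=\sum_{s\le T/2}(\delta_s-A_s)q^s[T+1-2s]_q$, with $T=(m-1)a$. It treats the leftover reflected term $W=(A(q)-q^LA(q^{-1}))/(1-q^m)$, with $L=T+1-C$, by the same lemma. This gives $H=P/[m]_q+q^CW$. None of this structure appears in your proposal.

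\textbf{The wrong comparison.} What is needed, and what Sylvester's theorem computes, is $\delta_s=[q^s](1-q)G$. This is the difference of consecutive coefficients, equal to the dimension of semi-invariants of degree $a=n-m$ and weight $s$ for forms of degree $m-1$, and it must be compared with $[q^s]A$ for $s\le T/2$. The lower-index difference $\qb nk-\qb n{k-1}$ is a different object, not the count in Sylvester's theorem, and your identification of it with semi-invariants is incorrect.

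\textbf{The role of quadratic and cubic semi-invariants.} They are not a device for $m=2,3$; those cases are a direct computation, e.g.\ $R_{n,2}=[n-1]_q(1+q^{n+1})/(1+q)$. In the paper they are algebraically independent generators $f_k$, $2\le k\le m-1$. Their monomials, padded by $x_0$ to degree $a$, give the partition lower bound $\Bcal_{a,m-1}$ for $\delta_s$ for every $m$.

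\textbf{The hard case.} The genuinely hard case is $g$ with two or more prime factors, where $A$ is a union rather than a single $Q_p$. There the paper needs disjoint classes of partitions marked by odd parts, plus explicit rectangle-fitting inequalities, to prove $A\le_q\Bcal_{a,m-1}$. Your closing claim that general $m$ follows because the remaining terms ``decay geometrically'' is unsupported. The terms $Q_d$ have degrees $(m/d-1)a$, enter with alternating M\"obius signs, and must all be absorbed simultaneously under the first differences.
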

For $m=1$, we have $F_{n,1}=1$. For $m\ge2$, the consequence follows from
\begin{equation}\label{RF}
 F_{n,m}=\frac{R_{n,m}}{(1-q^{m(n+1)})(q^{n+2};q)_{m-2}}
 \qquad(m\ge2).
\end{equation}
At $q=1$,
\[
 R_{n,m}(1)=\frac mn\binom nm=\binom{n-1}{m-1},
\]
the number of compositions of $n$ into $m$ positive parts. We do not yet
have a natural statistic on these compositions, or a direct partition
model, whose generating function is $R_{n,m}$ in general.
Section~\ref{furtherremarks} gives a Dyck-path construction in the coprime
case and asks for a construction in the remaining cases. Such a model,
together with~\eqref{RF}, would also count the first differences of
$a_{n,m}(N)$ directly.

The coprime case of Theorem~\ref{thm} follows from the positivity of
$\qb nm/[n]_q$, the rational $q$-Catalan polynomial associated with
the coprime pair $(m,n-m)$; see
Haiman~\cite[Propositions~2.5.3 and~2.5.4]{Haiman}
and Andrews~\cite{Andrews}.
More generally, if $g=\gcd(n,m)$, then
$[g]_q\qb nm/[n]_q\in\N[q]$ by Brunetti and Del Lungo~\cite{BrunettiDelLungo};
see also Chen and Hou~\cite[Theorem~8]{ChenHou} and
Guo and Krattenthaler~\cite[Lemma~5.2]{GuoKrattenthaler}.
To obtain $R_{n,m}$ from this quotient, one multiplies by
$[m]_{q^{n+1}}/[g]_q$. For $g>1$ this is a polynomial with coefficient
$-1$ at $q$, so the known positivity does not suffice.

We will subtract a M\"obius correction from a Gaussian polynomial and
bound that correction by the Gaussian first differences.
Sylvester's theorem identifies these differences with dimensions of
spaces of semi-invariants. We use quadratic and cubic semi-invariants
with the bidegrees of the independent family constructed by Jafari and
Najafi Amin~\cite{JafariNajafiAmin}.
For related comparisons of Gaussian polynomials, see
Chen and Jia~\cite{ChenJiaReiner,ChenJiaSylvester}; strict unimodality and
quantitative lower bounds were studied by Pak and
Panova~\cite{PakPanovaStrict,PakPanovaBounds}.

Put $a=n-m$, $g=\gcd(a,m)$, $C=n+1$, and
\begin{equation}\label{AH}
 Q_d=\qb[q^d]{n/d-1}{m/d-1}\ (d\mid g),\quad G=Q_1,\quad
 A=-\sum_{\substack{d\mid g\\d>1}}\mu(d)Q_d,\quad
 H=\frac{G-[C]_qA}{[m]_q},
\end{equation}
where $\mu$ is the M\"obius function. Empty sums are zero. We suppress
$n,m$ in $A,H$ and $Q_d$ when there is no ambiguity.

\begin{theorem}\label{correction}
For every $0<m<n$, one has $A\in\N[q]$, $H\in\Z[q]$, and
$H_{n,m}=H_{n,n-m}$. Moreover,
\[
 H\in\N[q]\quad\Longleftrightarrow\quad
 n\text{ is odd or }m\notin\{2,n-2\}.
\]
For even $n\ge4$, the exceptional values are
$H_{n,2}=H_{n,n-2}=-q^{n-1}$.
\end{theorem}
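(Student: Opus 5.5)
The plan has three parts: integrality of $H$ by evaluation at roots of unity, positivity of $A$ by an inclusion--exclusion (union-of-fixed-sets) interpretation, and positivity of $H$ by bounding the correction with Gaussian first differences. The last part is the real content.

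\emph{Integrality of $H$.} Since $[m]_q=\prod_{1<e\mid m}\Phi_e(q)$ has simple roots, it suffices to show that $G-[C]_qA=\sum_{d\mid g}\mu(d)Q_d\cdot(\text{weights})$ vanishes at every primitive $e$-th root of unity $\zeta$ with $1<e\mid m$.

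By the $q$-Lucas theorem, with $m-1\equiv -1\pmod e$, the value $\qb[\zeta]{n-1}{m-1}$ is nonzero only if $(n-1)\bmod e=e-1$, i.e.\ $e\mid n$, hence $e\mid g$. In that case it equals $\binom{n/e-1}{m/e-1}$. In the same case $[C]_\zeta=[n+1]_\zeta=1$.

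Applying $q$-Lucas to $Q_d$ at $\zeta^d$, which has order $e/\gcd(e,d)$, gives $Q_d(\zeta)=f(\lcm(d,e))$ with $f(L)=\binom{n/L-1}{m/L-1}$, interpreted as $0$ when $L\nmid g$. We are then reduced to the identity
\[
\sum_{d\mid g}\mu(d)\,f(\lcm(d,e))=0 \qquad (e>1).
\]
This holds by pairing $d\leftrightarrow dp$ for a fixed prime $p\mid e$: the pairing preserves $\lcm(\cdot,e)$ and flips the sign of $\mu$. When $e\nmid n$ both sides vanish trivially. Hence $[m]_q\mid G-[C]_qA$.

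\emph{Symmetry.} The relation $H_{n,m}=H_{n,n-m}$ should follow from the identity
\[
\frac{[m]_q}{[n]_q}\,\qb nm=\qb{n-1}{m-1}.
\]
This rewrites $G/[m]_q=\qb nm/[n]_q$ and likewise $Q_d/[m]_q$ in terms of $q^d$-Gaussians. Every term is then visibly symmetric in $m\leftrightarrow a$, because $g=\gcd(a,m)$ is symmetric.

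\emph{Positivity of $A$.} I will realize $Q_d$ as the generating function of a set $S_d$ of objects (lattice paths, or partitions in the $(m-1)\times a$ box) with a $d$-fold cyclic symmetry, arranged so that $S_d\cap S_{d'}=S_{\lcm(d,d')}$ with statistic-preserving inclusions. Inclusion--exclusion then gives
\[
A=\sum_{S_{\text{weighted}}}\Bigl|\bigcup_{p\mid g}S_p\Bigr|\in\N[q].
\]
The natural candidate for $S_d$ is the set of binary words of $n$ letters with $m$ ones that are invariant under rotation by $n/d$, normalized to begin with a $1$, with $q$ counting major index or area. This is the cyclic-sieving model behind Brunetti--Del Lungo. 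If the intersection property fails in that model, I fall back on showing $Q_d\pos Q_{dp}$ together with a direct comparison along the divisor lattice.

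\emph{Positivity of $H$.} This is the main obstacle. I would write
\[
G=\sum_k \Delta_k\, q^k[\,N-2k+1\,]_q,
\]
where $\Delta_k$ are the first differences of the (unimodal, symmetric) coefficients of $G=\qb{n-1}{m-1}$. By Sylvester's theorem, $\Delta_k$ equals the dimension of a space of semi-invariants. The goal is to bound $[C]_qA$ coefficientwise by sums of shifted copies $q^j[m]_q$ weighted by those first differences, leaving a remainder that is $[m]_q$ times a nonnegative polynomial.

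Concretely, the degree of $A$ is much smaller than that of $G$, since $Q_d$ has degree about $(m-d)a/d$. So I will produce enough independent low-degree semi-invariants to force $\Delta_k\ge [q^k]A$, or a suitable partial sum of it, in the relevant range. The natural supply is products of the quadratic and cubic invariants in the Jafari--Najafi Amin bidegrees: monomials $I_2^iI_3^j$ give at least $\#\{(i,j):2i+3j=k\}$ independent semi-invariants in degree $k$. This should dominate the coefficients of $\sum_{p}Q_p$, which are themselves $q^2$- or $q^3$-Gaussians in the worst case $d\in\{2,3\}$.

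Small $m$ (namely $m\le 3$ or $a\le 3$) will be handled by direct computation. In particular, for $m=2$ with $n$ even we get $g=2$, $A=Q_2=1$ and $G=[n-1]_q$. Then
\[
G-[n+1]_q=-q^{n-1}-q^n=-q^{n-1}(1+q),
\]
so $H=-q^{n-1}$, which gives the stated exception.
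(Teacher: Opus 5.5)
Your integrality and symmetry arguments are the paper's ($q$-Lucas at the roots of $[m]_q$ with the pairing $d\leftrightarrow dp$, and $Q_d/[m]_q=\qb[q^d]{n/d}{m/d}/[n]_q$). Your exceptional computation is correct, and the union-of-sets idea for $A\in\N[q]$ is the right one. But the model you name does not realize $Q_d$. Already for $d=1$, major index on words of length $4$ with two ones beginning with $1$ gives $q+q^2+q^4\ne\qb{3}{1}$. The paper instead uses $(m-1)$-subsets $S\subseteq\{1,\ldots,n-1\}$ of weight $\sum_{t\in S}t-\binom m2$ and $X_d=\{S:S\mathbin\triangle\{1,\ldots,m-1\}\subseteq d\Z\}$, for which $X_d\cap X_{d'}=X_{\lcm(d,d')}$ is immediate. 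Your fallback of termwise comparisons $Q_d\pos Q_{dp}$ cannot replace this: with four primes dividing $g$, six pair terms would have to be dominated by four prime terms.

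The genuine gap is the positivity of $H$, where both of its ingredients are missing.

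\emph{The semi-invariant supply.} Here $\delta_k=[q^k](1-q)G$ counts semi-invariants of degree $a$ and weight $k$, and monomials in two generators $I_2,I_3$ give only $\#\{(i,j):2i+3j=k\}$ of them. For $(n,m)=(12,6)$ one has $[q^4]A=[q^4]\qb[q^2]{5}{2}=2=\delta_4$, against your count $1$. You need one semi-invariant for each weight $2\le k\le m-1$, quadratic for even $k$ and cubic for odd $k$ (coefficients of $\Phi(y)\Phi(-y)$ and $\Phi(y)^2\Phi(-2y)$), which gives the family $\Bcal_{a,m-1}$. You then need $A\le_q\Bcal_{a,m-1}$ at all weights up to $\deg A\le(m/2-1)a$, which lies within $a/2$ of the centre $T/2$, $T=(m-1)a$; so this is not a low-weight comparison. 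When $g$ has two distinct prime factors this is the marked-class and rectangle-fitting argument of Lemma~\ref{domination}.

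\emph{From coefficient inequalities to the quotient.} Even granting $\delta_s\ge[q^s]A$ for $s\le T/2$, nothing in your plan turns a coefficient inequality into positivity of a quotient by $[m]_q$. The intervals $q^s[T+1-2s]_q$ are not individually divisible by $[m]_q$. Moreover, the inequality controls $(A(q)-q^{T+1}A(q^{-1}))/(1-q)$ rather than $[C]_qA$. The paper writes $H=P/[m]_q+q^CW$, where
\[
 P=\sum_{s\le T/2}(\delta_s-[q^s]A)\,q^s[T+1-2s]_q
\]
is symmetric and unimodal, and $W=(A(q)-q^LA(q^{-1}))/(1-q^m)$ with $L=T+1-C$. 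Then $P/[m]_q\in\N[q]$ follows from the division lemma: a symmetric unimodal polynomial in $\N[q]$ divisible by $[r]_{q^\tau}$ has quotient in $\N[q]$. The reflected correction $W\in\N[q]$ needs a separate argument via $K_d=(1-q^{b_d})Q_d/(1-q^m)$ and the bound $\deg(A-Q_p)\le L/2$.
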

Theorem~\ref{thm} follows from Theorem~\ref{correction} and the identity
\begin{equation}\label{Rdecomp}
 R_{n,m}=[m]_{q^C}H+[C]_{q^m}A.
\end{equation}
Section~\ref{comparisons} bounds $A$ by first differences of $G$.
Section~\ref{correctionproof} uses this bound to prove
Theorem~\ref{correction}.
Corollary~\ref{singer} gives the subspace case of the orbit-polynomial
conjecture, and Section~\ref{flags} proves the result for flags.
Armstrong~\cite[Section~11.2]{Armstrong} discusses the subspace case and
proposes a further coefficientwise monotonicity conjecture.

\section{Two coefficient comparisons}\label{comparisons}
The proof of Theorem~\ref{correction} requires a lower bound for the first
differences of a Gaussian polynomial and an upper bound for the correction
$A$. We obtain both bounds from the same family of partitions.

\begin{lemma}\label{slopes}
For positive integers $a,b$, let $\Bcal_{a,b}(q)$ count partitions with parts $2,\ldots,b$, assigning cost two to each even part and cost three to each odd part, with total cost at most $a$. Then
\begin{equation}\label{slopebound}
 [q^s](1-q)\qb{a+b}{b}\ \ge\ [q^s]\Bcal_{a,b}(q)
 \qquad(0\le s\le ab/2).
\end{equation}
\end{lemma}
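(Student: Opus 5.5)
The plan is to read both sides of \eqref{slopebound} as counts of semi-invariants and to compare them by exhibiting enough linearly independent semi-invariants. By Sylvester's theorem, for $0\le s\le ab/2$ the coefficient $[q^s](1-q)\qb{a+b}{b}=p(s;a,b)-p(s-1;a,b)$ equals the dimension of the space of semi-invariants (highest-weight vectors for $\mathfrak{sl}_2$) of degree $a$ and weight $s$ in the coefficients $x_0,\ldots,x_b$ of a binary form of order $b$. Here $p(s;a,b)$ counts partitions of $s$ into at most $a$ parts, each at most $b$. So it suffices to construct, for each partition $\lambda$ counted by $[q^s]\Bcal_{a,b}$, a semi-invariant $S_\lambda$ of degree $a$ and weight $s$, and to show that these are linearly independent.

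The construction will be multiplicative. For each $k\in\{2,\ldots,b\}$ I would take a basic semi-invariant $J_k$ of weight $k$, quadratic when $k$ is even and cubic when $k$ is odd, with the bidegrees of the Jafari--Najafi Amin family. For even $k$ this is the classical quadratic
\[
 J_k=\sum_{i=0}^{k}(-1)^i\binom ki x_ix_{k-i}
\]
up to normalisation. For odd $k$ I would use the cubic of weight $k$ from their family, for example the transvectant $(f,(f,f)_{k-1})_1$ suitably written. For a partition $\lambda=(\lambda_1,\lambda_2,\ldots)$ of $s$ with total cost $c(\lambda)\le a$, put
\[
 S_\lambda=x_0^{\,a-c(\lambda)}\prod_i J_{\lambda_i}.
\]
Since $x_0$ is itself a semi-invariant of degree $1$ and weight $0$, and products of semi-invariants are semi-invariants, $S_\lambda$ is a semi-invariant of degree exactly $a$ and weight $s$. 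The number of such $S_\lambda$ is $[q^s]\Bcal_{a,b}$.

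The main obstacle is linear independence. I would order monomials in $x_0,\ldots,x_b$ lexicographically, with $x_0$ largest, and identify the leading monomial of each $J_k$. For the quadratics this is $x_0x_k$. For the cubics it should be $x_0x_1x_{k-1}$, or something similar, and pinning this down correctly is where I would use the Jafari--Najafi Amin construction. Leading monomials are multiplicative, so the leading monomial of $S_\lambda$ is $x_0^{\,a-c(\lambda)}$ times the product of those of the $J_{\lambda_i}$. The claim to verify is that $\lambda$ can be recovered from this monomial. The exponents of $x_j$ for $j\ge2$ record the multiplicities of the parts, once the cubic contributions are disentangled. This requires checking that a cubic leading term cannot coincide with a product of quadratic ones. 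If it can, I would instead modify the cubics by subtracting suitable multiples of products $x_0J_j$, so that their leading terms avoid the quadratic pattern. Once the leading monomials are distinct, the $S_\lambda$ are independent.

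Finally, the hypothesis $s\le ab/2$ is used only through Sylvester's theorem, to identify the first difference with a dimension. No degree constraint beyond $c(\lambda)\le a$ and parts at most $b$ is needed, because every index of the form coefficients that appears is at most $b$.
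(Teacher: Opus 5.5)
Your outline matches the paper's: Sylvester's theorem, quadratic semi-invariants in even weight and cubic ones in odd weight, the products $x_0^{a-c(\lambda)}\prod_i J_{\lambda_i}$, and linear independence. A lex leading-monomial argument would do the same job as the paper's triangularity argument. The gap is at the one step that carries the content. You never fix the odd-weight cubics or verify independence, and what you say about them is wrong.

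In lex order with $x_0$ largest, $x_0^2x_k$ is the only degree-three monomial of weight $k$ containing $x_k$. It is therefore the leading monomial of any cubic in which it occurs. Your guess $x_0x_1x_{k-1}$ could only be the leading term of a cubic not involving $x_k$, and then the argument fails. For example, $\lambda=(4,3)$ and $\lambda=(5,2)$, both of weight $7$ and cost $5$, would give the same leading monomial $x_0^{a-3}x_1x_2x_4$.

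Such cubics exist, so the choice matters. The cubic semi-invariants of weight $9$ form a two-dimensional space containing a nonzero element in $x_0,\ldots,x_8$ alone. Your fallback does not help. A weight-$k$ cubic can only be altered by other weight-$k$ cubic semi-invariants. The decomposable ones would be $x_0$ times a quadratic semi-invariant of weight $k$, and there is no nonzero quadratic semi-invariant of odd weight. Moreover, $x_0J_j$ has weight $j$, not $k$.

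The missing ingredient is a cubic $J_k$, for each odd $k\le b$, with nonzero coefficient of $x_0^2x_k$. The paper takes $f_k=[y^k]\Phi(y)^2\Phi(-2y)$, whose $x_0^2x_k$ coefficient is $2-2^k\neq0$. Your transvectant would also work, but you must check it. Its source is proportional to $a_0\phi_1-a_1\phi_0$, where $\phi_0,\phi_1$ are the first two coefficients of $(f,f)_{k-1}$, and one checks that $\phi_1$ contains $a_0a_k$ when $k\le b$.

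With such cubics, $\operatorname{LM}(S_\lambda)=x_0^{a-\ell(\lambda)}\prod_i x_{\lambda_i}$. This determines $\lambda$ because all parts are at least $2$, so the $S_\lambda$ are independent. This is the same triangular structure the paper uses to show that $x_0,x_1,f_2,\ldots,f_b$ are algebraically independent.
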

\begin{proof}
We use Sylvester's theorem to count semi-invariants. Give $x_i$ degree one
and weight $i$ in $\Q[x_0,\ldots,x_b]$, and let $V_{a,s}$ be the component
of degree $a$ and weight $s$. For
$D=\sum_{i=1}^b x_{i-1}\partial_{x_i}$, the theorem
(see~\cite[Theorem~3.1]{ChenJiaSylvester}, after rescaling its coordinates by $x_i=a_i/i!$) gives
\[
 \dim(\ker D\cap V_{a,s})=[q^s](1-q)\qb{a+b}{b}
 \qquad(0\le s\le ab/2).
\]
It is enough to construct independent semi-invariants indexed by the
partitions counted by $\Bcal_{a,b}$. The required degrees are two and three,
as in~\cite{JafariNajafiAmin}. Put
\[
 \Phi(y)=\sum_{i=0}^b x_i y^i,\qquad
 \Phi_2(y)=\Phi(y)\Phi(-y),\qquad
 \Phi_3(y)=\Phi(y)^2\Phi(-2y).
\]
Since $D\Phi(y)\equiv y\Phi(y)\pmod{y^{b+1}}$, the product rule gives
$D\Phi_2(y)\equiv D\Phi_3(y)\equiv0\pmod{y^{b+1}}$:
the argument multipliers sum to zero in each product. Define
\[
 f_k=\begin{cases}
 [y^k]\Phi_2(y),&k\text{ even},\\
 [y^k]\Phi_3(y),&k\text{ odd},
 \end{cases}
 \qquad 2\le k\le b.
\]
Thus $f_k$ is a semi-invariant of weight $k$ and of degree two or three
according as $k$ is even or odd. These are the assigned costs.
The highest-index term of $f_k$ is $2x_0x_k$ for even $k$ and
$(2-2^k)x_0^2x_k$ for odd $k$, with all remaining terms involving only
$x_0,\ldots,x_{k-1}$.
Successively solving for $x_k$ after inverting $x_0$ gives
\[
 \Q[x_0^{\pm1},x_1,f_2,\ldots,f_b]
 =\Q[x_0^{\pm1},x_1,\ldots,x_b].
\]
This ring has transcendence degree $b+1$, so the $b+1$ elements
$x_0,x_1,f_2,\ldots,f_b$ are algebraically independent.
A partition of weight $s$, with multiplicities $r_k$ and cost $c\le a$,
now gives the monomial $x_0^{a-c}\prod_k f_k^{r_k}$ in $\ker D\cap V_{a,s}$.
These monomials are linearly independent, proving~\eqref{slopebound}.
\end{proof}

\begin{lemma}\label{domination}
For every $0<m<n$, the correction in~\eqref{AH} satisfies
\begin{equation}\label{union}
 0\le_q A\le_q\sum_{p\mid g}Q_p,\qquad A-Q_p\pos0\quad(p\mid g),
\end{equation}
where $p$ ranges over prime divisors of $g$.
If $a=n-m\ge m\ge3$, then
\begin{equation}\label{dom}
 0\le_q A\le_q\Bcal_{a,m-1}.
\end{equation}
\end{lemma}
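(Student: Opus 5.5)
The plan is to realize every $Q_d$ as a generating function over a subset of a single combinatorial family, arranged so that $A$ becomes the generating function of a union of sets. Concretely, I would take $W$ to be the compositions of $n$ into $m$ parts, encoded as $(m-1)$-subsets $\{s_1<\dots<s_{m-1}\}\subset\{1,\dots,n-1\}$ of partial sums, or equivalently as partitions in an $(m-1)\times a$ box. I would look for a statistic $\mathrm{st}$ on $W$ with three properties:
\begin{itemize}
\item $\sum_{w\in W}q^{\mathrm{st}(w)}=G$;
\item for each $d\mid g$, the set $W_d$ of compositions that are $d$-fold repetitions of a composition of $n/d$ into $m/d$ parts satisfies $\sum_{w\in W_d}q^{\mathrm{st}(w)}=Q_d$;
\item $W_d\cap W_e=W_{\lcm(d,e)}$.
\end{itemize}
The naive statistic $\sum(s_i-i)$ is off by a constant on $W_d$. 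So I would first try a rotation-normalized version, choosing the cyclic rotation of the word that minimizes a major-index-type statistic, so that $\mathrm{st}(u^d)=d\,\mathrm{st}(u)$. Failing that, I would build the sets $W_d$ directly inside the box-partition model so that their generating functions are $Q_d$ and they intersect along lcm's.

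Once such a model exists, M\"obius inversion over the divisors $d>1$ of $g$ gives
\[
A=-\sum_{\substack{d\mid g\\ d>1}}\mu(d)\sum_{w\in W_d}q^{\mathrm{st}(w)}
=\sum_{w\in\bigcup_{p\mid g}W_p}q^{\mathrm{st}(w)} .
\]
Since $\mu(d)=0$ unless $d$ is squarefree, this is inclusion--exclusion over the primes $p\mid g$. All three inequalities in~\eqref{union} are then immediate. The union bound gives $A\le_q\sum_pQ_p$, and $W_p\subseteq\bigcup W_{p'}$ gives $A-Q_p\pos0$.

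For~\eqref{dom}, I would embed the union $\bigcup_pW_p$ weight-preservingly into the partitions counted by $\Bcal_{a,m-1}$. It is enough to do this for each $W_p$, consistently on overlaps. By conjugation, $Q_p=\qb[q^p]{a/p+m/p-1}{m/p-1}$ counts partitions with at most $a/p$ parts, each part a multiple $pj$ with $1\le j\le m/p-1$. All parts therefore lie in $\{p,\dots,m-p\}\subseteq\{2,\dots,m-1\}$.
\begin{itemize}
\item For $p=2$, every part is even of cost $2$, and there are at most $a/2$ parts. The total cost is $\le a$, so this is literally a sub-family of $\Bcal_{a,m-1}$.
\item For odd $p\ge3$, each part has cost at most $3\le p$, and there are at most $a/p$ parts. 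Again the total cost is $\le a$, so each $W_p$ lands inside $\Bcal_{a,m-1}$.
\end{itemize}
The hypothesis $a\ge m\ge3$ guarantees that the range $\{2,\dots,m-1\}$ is nonempty and that these box constraints really are implied.

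The main obstacle is making these inclusions compatible with the union. The images of $W_p$ and $W_{p'}$ overlap, for instance in partitions whose parts are all multiples of $6$. For $A\le_q\Bcal_{a,m-1}$ I need the image of $\bigcup W_p$ to be a union of sets, not a multiset sum. So I will check that the image of $W_p\cap W_{p'}=W_{pp'}$ is exactly the intersection of the two images. Under the "parts are multiples of $p$" description this holds only if the bounds on the number of parts agree; they are $a/p$ versus $a/(pp')$, so some care is needed.

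If this fails, I would instead bound $A$ by $\sum_pQ_p$ and inject the images of the distinct primes disjointly. The odd-$p$ images would be modified so that they always contain an odd part, for example by replacing one even part $pj$ by an odd part or by a pair of parts of cost $\le p$. This uses the slack between the cost $p$ budgeted per part and the cost $\le3$ actually used.
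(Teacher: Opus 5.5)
There are genuine gaps in both halves.

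\textbf{The set model for \eqref{union}.} Reducing \eqref{union} to a set model with $W_d\cap W_e=W_{\lcm(d,e)}$ is the right idea, and once such a model exists your inclusion--exclusion paragraph is complete. But you never construct it, and your starting family does not work. With the natural weight $\sum_{t\in S}t-\binom m2$, a $d$-fold repetition weighs $ma(d-1)/(2d)$ more than $d$ times the weight of its block. A statistic constant on rotation classes cannot give $G$ at all: for $m\ge2$ one has $[q^0]G=[q^1]G=1$, which would need two rotation-fixed compositions, and only the constant one exists.

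The missing idea is to perturb a fixed base subset rather than repeat a block. Stay in your universe of $(m-1)$-subsets $S\subseteq\{1,\ldots,n-1\}$. Put $B_0=\{1,\ldots,m-1\}$ and $X_d=\{S:S\mathbin\triangle B_0\subseteq d\Z\}$. Such an $S$ contains the non-multiples of $d$ in $B_0$ and exactly $m/d-1$ of the $n/d-1$ multiples of $d$. With $U=\{t/d:t\in S,\ d\mid t\}$, its weight is exactly $d\bigl(\sum_{u\in U}u-\binom{m/d}2\bigr)$. So $X_d$ has generating function $Q_d$, and $X_d\cap X_e=X_{\lcm(d,e)}$ because $d\Z\cap e\Z=\lcm(d,e)\Z$.

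\textbf{The bound \eqref{dom}.} Your single-prime embedding is correct and settles the prime-power case, where $A=Q_p$. With two distinct primes both of your routes fail; take $n=24$, $m=12$, so $g=12$.
\begin{itemize}
\item The union of images undercounts. Under your ``parts in $p\Z$'' description, the images of $W_2$ and $W_3$ both contain $6+6+6$. But $Q_6=1+q^6+q^{12}$, so at weight $18$ the union of images has one element fewer than $A$.
\item The fallback is false outright. One has $[q^6](Q_2+Q_3)=3+2=5$, whereas $[q^6]\Bcal_{12,11}=4$ (the partitions $6$, $4+2$, $3+3$, $2+2+2$), and this also equals $[q^6]A$. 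So $\sum_pQ_p\le_q\Bcal_{a,m-1}$ fails.
\end{itemize}

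The argument must therefore split by weight.

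For $s\le a$, the length bounds $a/d$ are vacuous, so the classes ``all parts in $d\Z$'' do intersect along lcm's. Then $A_s$ counts some partitions of $s$ into parts from $\{2,\ldots,m-1\}$. All of these lie in $\Bcal_{a,m-1}$, because cost never exceeds weight.

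Only for $s>a$ may one pass to $\sum_pQ_p$, and there you need explicit pairwise disjoint marked classes together with a counting estimate. The paper uses the all-even partitions for $p=2$. For odd $p$ with $h=m/p-1\ge2$, it uses partitions whose odd parts are exactly $p$ when $s$ is odd, or exactly $3,p$ when $s$ is even. These classes are counted by $[z^v]\qb[z]{E+\ell_\varepsilon}{E}$. They are compared with $[z^u]\qb[z]{h+a/p}{h}$ by a rectangle inclusion and unimodality. This needs $h\times(a/p)$ to fit in $E\times\ell_\varepsilon$ and $u\le v\le E\ell_\varepsilon-u$. Verifying these conditions is where $a\ge m$, $s>a$ and the separate treatment of $p=3$ enter.

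Your suggestion of replacing an even part $pj$ by an odd part or by a pair of parts is not a well-defined weight-preserving injection. It does not separate distinct odd primes from each other, and it supplies none of these estimates.
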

\begin{proof}
First we realize $A$ as the generating function of a union. Let
$B_0=\{1,\ldots,m-1\}$ and give an $(m-1)$-subset $S$ of
$\{1,\ldots,n-1\}$ weight $\operatorname{wt}(S)=\sum_{t\in S}t-\binom m2$.
Put $X_d=\{S:S\mathbin\triangle B_0\subseteq d\Z\}$. A subset in $X_d$
agrees with $B_0$ at the nonmultiples of $d$ and chooses $m/d-1$ multiples. On writing
$U=\{t/d:t\in S,\ d\mid t\}$, its weight is
$d\bigl(\sum_{u\in U}u-\binom{m/d}{2}\bigr)$, so $X_d$ has generating function $Q_d$.
Since $X_d\cap X_{d'}=X_{\lcm(d,d')}$, inclusion--exclusion shows that $A$ counts
$\bigcup_{p\mid g}X_p$. This proves~\eqref{union}.
To prove~\eqref{dom}, assume $a\ge m\ge3$. The case $g=1$ is immediate,
since $A=0$. For $g>1$ we compare coefficients at weight $s$ as follows;
here $p,p^\prime$ are primes:
\[
\begin{array}{c|l}
 s\le a & \text{inclusion--exclusion without a length constraint},\\
 s>a,\ g=p^r & \text{the direct inclusion }Q_p\le_q\Bcal_{a,m-1},\\
 s>a,\ p,p^\prime\mid g\ (p\ne p^\prime)
   & \text{disjoint classes with specified odd parts}.
\end{array}
\]
\smallskip\noindent\emph{Low weights and prime powers.}
For $d>1$, the conjugate rectangle interpretation of $Q_d$ counts partitions
all of whose parts lie in $\{d,2d,\ldots,m-d\}$, with at most $a/d$ parts.
At weight $s\le a$, the length constraint is vacuous. Also
$\Bcal_{a,m-1}$ counts every partition into $2,\ldots,m-1$, since cost
never exceeds weight. The divisibility classes intersect by least common
multiples, so their union proves~\eqref{dom} in this range.
If $g$ is a prime power, $A=Q_p$:
its partitions have at most $a/p$ parts, each of cost at most $p$.
Thus every such partition is counted by $\Bcal_{a,m-1}$, at every weight.

\smallskip\noindent\emph{Disjoint classes at higher weights.}
Suppose now that $g$ has at least two distinct prime factors and $s>a$.
We assign to each prime in~\eqref{union} a disjoint class of partitions
counted by $\Bcal_{a,m-1}$. Each class will dominate its corresponding $Q_p$.
If $2\mid g$, all-even partitions count $Q_2$ exactly.
For each odd $p\mid g$, put $h=m/p-1$. If $h\le1$, then $\deg Q_p=ha\le a$,
so there is nothing to prove at weight $s$.
For $h\ge2$, use partitions with exactly one odd part $p$ when $s$ is odd,
and exactly two odd parts $3,p$ when $s$ is even. For $p=3$ the latter means
two copies of $3$. The remaining parts are even. Thus different primes use
disjoint classes, also disjoint from the all-even class when it is present.

Let $\varepsilon=1$ or $2$ according as $s$ is odd or even, and put
\[
 E=\left\lfloor\frac{m-1}{2}\right\rfloor,\quad
 \ell_\varepsilon=\left\lfloor\frac{a-3\varepsilon}{2}\right\rfloor,
 \quad w_1=p,\quad w_2=p+3.
\]
After removing these odd parts, we may use at most $\ell_\varepsilon$ even
parts, each at most $2E$. Halving the even parts gives the target count
$[z^v]\qb[z]{E+\ell_\varepsilon}{E}$, where $v=(s-w_\varepsilon)/2$.
A nonzero source coefficient has $p\mid s$ and $s\le ha$. By Gaussian
symmetry it equals $[z^u]\qb[z]{h+a/p}{h}$, where
$u=\min(s,ha-s)/p$.
The comparison has a simple diagram interpretation. Divide the original
parts by $p$ and conjugate to obtain a partition in the $h\times(a/p)$
rectangle. If $s>ha/2$, take its rectangular
complement, so its size is $u$. After transposing this rectangle if needed,
view the same Ferrers diagram inside the target rectangle. This injects the
source partitions into the target partitions of size $u$.
Symmetry and unimodality compare the target coefficients at $u$ and $v$,
provided
\begin{equation}\label{fit}
 h\times(a/p)\text{ fits in }E\times\ell_\varepsilon,\qquad
 0\le u\le v\le E\ell_\varepsilon-u,
\end{equation}
where transposition of the source rectangle is allowed. Gaussian unimodality
follows from the same Sylvester theorem used in Lemma~\ref{slopes}.

\smallskip\noindent\emph{Verification of the two bounds.}
Since $h\ge2$, we have $a\ge m\ge3p$, and hence $s>3p$.
For $u\le v$, it suffices that $(p-2)s\ge pw_\varepsilon$.
When $s$ is odd, use $s>3p$ and $3(p-2)\ge p$.
When $s$ is even and $p\ge5$, use $s>3p$ and $3(p-2)\ge p+3$.
For $p=3$ and even $s$, the hypotheses give $m\ge12$ and $6\mid s$;
therefore $s>a\ge m$ implies $s\ge18$, as required.
Also $u+v\le(ha-w_\varepsilon)/2$. Thus the upper bound in~\eqref{fit}
follows if
\begin{equation}\label{eta}
 \eta_\varepsilon:=2E\ell_\varepsilon-ha+w_\varepsilon\ge0.
\end{equation}
We check the rectangle inclusion and~\eqref{eta} together. It suffices to
treat $\varepsilon=2$: $\ell_1\ge\ell_2+1$ and
$\eta_1\ge\eta_2+2E-3$, with $2E\ge3$. Indeed $m\ge12$ for $p=3$
(the value $m=9$ would make $g$ a prime power), and $m\ge15$ for $p\ge5$.

If $p\ge5$, then $a\ge m\ge15$, $h\le E$, $\ell_2\ge(a-7)/2\ge a/p$, and
\[
 2E\ell_2-ha\ge\frac{(m-2)(a-7)}2-(m/5-1)a
 =\frac{m(3a-35)}{10}+7>0.
\]
For $p=3$, write $m=3r$, $a=3t$, so $t\ge r\ge4$. Both $E$ and $\ell_2$ are at least $r-1$. If $t\ge6$, then $\ell_2\ge t$; otherwise $E\ge5\ge t$. Thus the rectangles fit. As $a$ increases by three, $\ell_2$ increases by at least one and $2E>3h$, so $\eta_2$ increases. At $a=m$ its value is
\[
 \eta_2=\begin{cases}
 (m-6)(m-12)/6,&m\text{ even},\\
 ((m-9)^2-24)/6,&m\text{ odd}.
 \end{cases}
\]
These are nonnegative, since $m\ge12$ and odd $m\ge15$.
At the smallest case $m=a=12$, the source rectangle is $3\times4$ and
the target is $5\times3$, illustrating why transposition is allowed in~\eqref{fit}.
Thus~\eqref{fit} holds in every case. The disjoint classes dominate the
prime sum in~\eqref{union} for $s>a$, and the low weights were already proved.
\end{proof}

\section{Positivity of the corrected quotient}\label{correctionproof}
The following division lemma turns symmetric unimodality into positivity
of a quotient.

\begin{lemma}\label{division}
Let $r,\tau$ be positive integers and let $P\in\N[q]$ be symmetric and
unimodal. If $P/[r]_{q^\tau}$ is a polynomial, it belongs to $\N[q]$.
\end{lemma}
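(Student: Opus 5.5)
Write $P=\sum_{j=0}^{D}p_jq^j$ with $p_j=p_{D-j}$ and $p_0\le p_1\le\cdots\le p_{\lfloor D/2\rfloor}$. Suppose $P=[r]_{q^\tau}T$ with $T\in\Z[q]$. Then $P=(1+q^\tau+\cdots+q^{(r-1)\tau})T$, which can be inverted as
\[
 T=\frac{1-q^\tau}{1-q^{r\tau}}P=(1-q^\tau)P\,(1+q^{r\tau}+q^{2r\tau}+\cdots).
\]
The plan is to read off the coefficients of $T$ from the lower half of $P$ and use symmetry for the upper half.

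First, for $j<r\tau$ the geometric series contributes only its constant term, so
\[
 t_j=p_j-p_{j-\tau}\qquad(0\le j<r\tau),
\]
with $p_i=0$ for $i<0$. More generally, $t_j=\sum_{k\ge0}(p_{j-kr\tau}-p_{j-kr\tau-\tau})$. I would restrict this to $j\le D/2$. Each difference $p_i-p_{i-\tau}$ with $i\le D/2$ is nonnegative by unimodality, since the sequence is nondecreasing up to the middle. So $t_j\ge0$ for all $j\le D/2$. This step needs no control of how large $D$ is relative to $r\tau$.

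Second, $T$ is symmetric: $P$ and $[r]_{q^\tau}$ are both palindromic, so $T$ is palindromic of degree $D-(r-1)\tau$. Hence $t_j=t_{D-(r-1)\tau-j}$. Every index $j$ satisfies either $j\le D/2$ or $D-(r-1)\tau-j\le D/2-(r-1)\tau\le D/2$. So each coefficient of $T$ is equal to some $t_i$ with $i\le D/2$, which is nonnegative by the first step. Therefore $T\in\N[q]$.

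The only point requiring care is the first step. One must check that every index $j-kr\tau$ appearing in the telescoped sum stays at most $D/2$, which is automatic since $j\le D/2$. One also uses that unimodality of a symmetric sequence means it is nondecreasing up to the middle, so all the differences $p_i-p_{i-\tau}$ in that range are nonnegative.
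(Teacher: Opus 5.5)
Your proof is correct and follows the paper's argument. Both expand $(1-q^\tau)/(1-q^{r\tau})$ to write each low coefficient of the quotient as $\sum_{k\ge0}(p_{j-kr\tau}-p_{j-kr\tau-\tau})$, which is nonnegative because $P$ is nondecreasing up to $D/2$, and both use the palindromicity of the quotient about $(D-(r-1)\tau)/2\le D/2$ for the remaining coefficients (your version proves nonnegativity on the slightly larger range $j\le D/2$ and works unchanged when $p_0=0$).
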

\begin{proof}
This is~\cite[Lemma~5.1]{GuoKrattenthaler}, with exponents $\tau$ and $r\tau$.
We include the short argument. Write $P=\sum_{j=0}^D p_jq^j$ with $p_j=p_{D-j}$,
allowing zero coefficients at both ends, and set $p_j=0$ outside $[0,D]$.
Below the midpoint of the quotient its coefficient at $k$ is
\[
 \sum_{t\ge0}(p_{k-tr\tau}-p_{k-\tau-tr\tau})\ge0.
\]
All indices lie at or below $(D-(r-1)\tau)/2\le D/2$, where the
coefficients of $P$ are nondecreasing. Symmetry gives nonnegativity for the
remaining coefficients. Division by the monic polynomial $[r]_{q^\tau}$
gives integrality. This also covers a specified centre with zero
coefficients at both ends, and the case $P=0$.
\end{proof}

\begin{proof}[Proof of Theorem~\ref{correction}]
Lemma~\ref{domination} gives $A\in\N[q]$ for all $n>m$.
For $H$, we first prove polynomiality and remove the small widths.
We then write $H$ as a sum of two nonnegative polynomials.

\smallskip\noindent\emph{1. Polynomiality.}
Let $\zeta$ have order $e\mid m$, with $e>1$. For $d\mid g$, the order of
$\zeta^d$ is $e'=e/\gcd(e,d)$, and $e'\mid m/d$.
If $e\nmid n$, then $e'\nmid n/d$. In the Gaussian $Q_d(\zeta)$, the
bottom residue is $e'-1$ and the top residue is smaller.
The $q$-Lucas evaluation~\cite[Theorem~2.2, pp.~131--132]{Sagan}
(see also the earlier proofs~\cite{Olive,Desarmenien}) therefore gives
$Q_d(\zeta)=0$, including $G(\zeta)=0$ when $d=1$.
If $e\mid n$, the same evaluation gives
\[
 G(\zeta)=f(e),\qquad Q_d(\zeta)=f(\lcm(d,e)),\qquad
 f(t)=\binom{n/t-1}{m/t-1}.
\]
Since $e\mid g$, choose a prime $p\mid e$. Pair squarefree divisors of $g$
by inserting or deleting $p$. The pairs have the same $\lcm(d,e)$ and
opposite M\"obius signs. Hence $\sum_{d\mid g}\mu(d)Q_d(\zeta)=0$, so
$A(\zeta)=G(\zeta)$. Also $[C]_\zeta=1$.
In both cases the numerator of $H$ vanishes at $\zeta$.
Since $[m]_q$ is monic with simple roots, $H\in\Z[q]$.

\smallskip\noindent\emph{2. Symmetry and small widths.}
For each $d\mid g$,
\begin{equation}\label{sym}
 \frac{Q_d}{[m]_q}=\frac{\qb[q^d]{n/d}{m/d}}{[n]_q}.
\end{equation}
Gaussian symmetry therefore gives $H_{n,m}=H_{n,n-m}$.
We may therefore assume $a=n-m\ge m$ when proving positivity.
If $m=1$, then $H=1$; if $m=2$ and $n$ is odd, then
$H=[(n-1)/2]_{q^2}$. For even $n\ge4$, one has $G=[n-1]_q$ and $A=1$, so
\[
 H_{n,2}=\frac{[n-1]_q-[n+1]_q}{[2]_q}=-q^{n-1}.
\]
Symmetry gives the same value at $m=n-2$, so these exceptions are
necessary. It remains to prove positivity for $a\ge m\ge3$.

Put $T=(m-1)a$, $L=T+1-C=(m-2)a-m$, and
$\delta_s=[q^s](1-q)G$. Lemmas~\ref{slopes} and~\ref{domination} give
\[
 \delta_s\ge A_s:=[q^s]A\qquad(0\le s\le T/2).
\]
We will subtract intervals from the usual unimodal expansion of $G$,
using these inequalities to keep the coefficients nonnegative. The
subtraction leaves a reflected correction, which we treat first.

\smallskip\noindent\emph{3. The reflected correction.}
We claim that
\begin{equation}\label{W}
 W=\frac{A(q)-q^LA(q^{-1})}{1-q^m}\in\N[q].
\end{equation}
For $d\mid g$, $d>1$, let $D_d=(m/d-1)a=\deg Q_d$ and
$b_d=L-D_d=(m-1-m/d)a-m$.
Then $b_d\ge a-m\ge0$ and $g\mid b_d$.
A root of $[m/g]_{q^g}$ has order dividing $m$ but not $g$, and therefore
not $n$. Step~1 gives $[m/g]_{q^g}\mid Q_d$.
Both polynomials are in $t=q^d$, with divisor $[m/g]_{t^{g/d}}$.
Since $Q_d$ is symmetric and unimodal in $t$, Lemma~\ref{division} in
that variable gives
\[
 K_d=\frac{(1-q^{b_d})Q_d}{1-q^m}
 =[b_d/g]_{q^g}\frac{Q_d}{[m/g]_{q^g}}\in\N[q],
\]
where $[0]_{q^g}=0$.
Reciprocity of $Q_d$ now gives
$W=-\sum_{d\mid g,\,d>1}\mu(d)K_d\in\Z[q]$.

If $g=1$, then $W=0$. Otherwise let $p$ be its least prime divisor and
$V=A-Q_p\pos0$. If $g$ is a prime power, then $V=0$ and $W=K_p$.
If the next prime divisor is $p'\ge3$, the union interpretation gives
\[
 \deg V\le(m/p'-1)a\le L/2,
\]
since
\[
 \frac L2-(m/p'-1)a
 =\frac m2\bigl((1-2/p')a-1\bigr)\ge0.
\]
In either case,
\[
 W-K_p=\frac{1}{[m]_q}\sum_{0\le j\le L/2}[q^j]V\,q^j[L-2j]_q.
\]
The nonzero intervals in the numerator all have centre $(L-1)/2$.
Their sum is nonnegative, symmetric and unimodal. Since $W-K_p$ is a
polynomial, Lemma~\ref{division} gives $W-K_p\pos0$ and proves~\eqref{W}.

\smallskip\noindent\emph{4. Symmetric unimodal completion.}
We now use $\delta_s\ge A_s$ to form
\begin{equation}\label{P}
 \begin{split}
 P&=\sum_{0\le s\le T/2}(\delta_s-A_s)q^s[T+1-2s]_q\\
  &=G-\frac{A(q)-q^{T+1}A(q^{-1})}{1-q}.
 \end{split}
\end{equation}
The second equality follows from the first-difference expansion of $G$
and the bound
$\deg A\le(m/2-1)a<T/2$ from~\eqref{union}; the equality also holds if $A=0$.
The first expression is a nonnegative sum of intervals with centre $T/2$.
Thus $P$ is nonnegative, symmetric and unimodal.
Since $T+1=C+L$, direct cancellation yields
\begin{equation}\label{completion}
H=\frac{P}{[m]_q}+q^CW.
\end{equation}
Since $H,W\in\Z[q]$, the quotient $P/[m]_q$ is a polynomial.
It is nonnegative by Lemma~\ref{division}, and $W$ is nonnegative by
step~3. Hence $H\in\N[q]$.
\end{proof}

\begin{proof}[Proof of Theorem~\ref{thm} and the exceptional cases]
Theorem~\ref{correction} and~\eqref{Rdecomp} prove $R_{n,m}\in\N[q]$ for
the asserted pairs, and~\eqref{RF} gives $F_{n,m}\in\N[[q]]$ when $m\ge2$.
For $m=1$ both expressions equal $1$.

For even $n\ge4$, the excluded width two has
\[
 R_{n,2}=[n]_{q^2}-q^{n-1},\qquad
 F_{n,2}=\frac{1-q^{n-1}}{(1-q^2)(1-q^{n+1})}.
\]
In each expression the coefficient of $q^{n-1}$ is $-1$.
Gaussian symmetry gives
\[
 F_{n,n-2}=\frac{F_{n,2}}{(q^{n+2};q)_{n-4}},\qquad
 R_{n,n-2}=R_{n,2}[(n-2)/2]_{q^{2(n+1)}}.
\]
The additional factors have constant term $1$ and no other term of degree
at most $n-1$, so the same negative coefficient persists. Thus all the
exceptions in Theorem~\ref{thm} are necessary.
\end{proof}

Dennis Stanton observed that Theorem~\ref{correction} implies the following
corollary, and suggested the flag extension in Section~\ref{flags}.
The corollary proves the free-orbit assertion for subspaces in Reiner,
Stanton and White~\cite[Conjecture~10.3]{RSW}: the case $a=1$ and
$\mathbf k=(m,n-m)$. For prime powers $q$, the quotient below counts orbits
of size $[n]_q$ of the Singer cycle
$\mathbb F_{q^n}^{\times}/\mathbb F_q^{\times}$ on the $m$-dimensional
$\mathbb F_q$-subspaces of $\mathbb F_{q^n}$; its polynomiality was proved
in~\cite[Corollary~10.2]{RSW}.

\begin{corollary}\label{singer}
For $0<m<n$,
\[
 \frac1{[n]_q}\sum_{d\mid\gcd(n,m)}
 \mu(d)\qb[q^d]{n/d}{m/d}\in\N[q].
\]
\end{corollary}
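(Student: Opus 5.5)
The plan is to express the Möbius sum through the quantities $G$, $A$, $H$ of~\eqref{AH} and then apply Theorem~\ref{correction}. Put $a=n-m$ and $g=\gcd(a,m)=\gcd(n,m)$. Identity~\eqref{sym} gives $\qb[q^d]{n/d}{m/d}/[n]_q=Q_d/[m]_q$ for every $d\mid g$. Summing with Möbius weights, the quotient in the corollary equals
\[
 \frac{1}{[m]_q}\sum_{d\mid g}\mu(d)Q_d=\frac{G-A}{[m]_q}.
\]
Since $[C]_q=[n+1]_q=1+q[n]_q$, the definition of $H$ gives
\[
 \frac{G-A}{[m]_q}=H+\frac{([C]_q-1)A}{[m]_q}=H+q\,\frac{[n]_qA}{[m]_q}.
\]
So it suffices to show, outside the exceptional pairs, that $H\in\N[q]$ and $[n]_qA/[m]_q\in\N[q]$, and then to treat the exceptions directly.

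For the second term, apply~\eqref{sym} once more:
\[
 \frac{[n]_qA}{[m]_q}=-\sum_{d\mid g,\,d>1}\mu(d)\qb[q^d]{n/d}{m/d}.
\]
I would show this is nonnegative by the inclusion--exclusion argument of Lemma~\ref{domination}. The polynomial $\qb[q^d]{n/d}{m/d}$ is the generating function of the partitions in the $m\times a$ rectangle all of whose parts are divisible by $d$. Equivalently, dividing the parts by $d$ gives the partitions in the $(a/d)$-wide box with at most $m/d$ parts.

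The class for $d$ and the class for $d'$ intersect in the class for $\lcm(d,d')$. Hence the displayed sum is the generating function of the union, over primes $p\mid g$, of the classes "all parts divisible by $p$". It therefore lies in $\N[q]$, and it is a polynomial because it is a finite signed sum of Gaussian polynomials. The same conclusion could also be obtained from $A\in\N[q]$ by a divisibility check, but the union interpretation is cleaner.

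Combining the two terms with Theorem~\ref{correction} proves the corollary whenever $n$ is odd or $m\notin\{2,n-2\}$. The main obstacle is the exceptional family, where $H=-q^{n-1}$. Here I would compute directly: for even $n\ge4$ and $m=2$ we have $G=[n-1]_q$ and $A=1$, so
\[
 \frac{G-A}{[2]_q}=\frac{q[n-2]_q}{[2]_q}=q\,[(n-2)/2]_{q^2}\in\N[q],
\]
using that $n-2$ is even. Since the summands $\qb[q^d]{n/d}{m/d}$ are symmetric under $m\mapsto n-m$, so is the quotient, and the case $m=n-2$ follows. This completes the plan.
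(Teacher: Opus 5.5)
Your reduction is the paper's. The quotient equals $(G-A)/[m]_q=H+qB$ with $B=[n]_qA/[m]_q=-\sum_{d\mid g,\,d>1}\mu(d)\qb[q^d]{n/d}{m/d}$, and Theorem~\ref{correction} handles $H$. Your exceptional-case value $q[(n-2)/2]_{q^2}$ and the symmetry $m\mapsto n-m$ are also correct. The gap is in the proof that $B\in\N[q]$, the only new ingredient, because your partition model does not support the inclusion--exclusion.

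\begin{itemize}
\item Partitions in the $m\times a$ rectangle with all parts divisible by $d$ have generating function $\qb[q^d]{m+a/d}{m}$, not $\qb[q^d]{n/d}{m/d}$. For $n=4$, $m=2$, $d=2$ they are $\emptyset,(2),(2,2)$, giving $1+q^2+q^4$, whereas $\qb[q^2]{2}{1}=1+q^2$.
\item Your ``equivalent'' description (parts divisible by $d$ and at most $a$, at most $m/d$ parts) has the right generating function. However, the classes then do not intersect by least common multiples, since the length bounds $m/d$ and $m/d'$ do not combine to $m/\lcm(d,d')$. For $m=a=6$, the partition $(6,6)$ lies in the classes for $2$ and $3$ but not in the class for $6$, which allows only one part.
\end{itemize}

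So in neither version is the M\"obius sum the generating function of a union. The paper uses the ``parts divisible by $d$'' picture only at weights $s\le a$, where the length constraint is vacuous. Your fallback, deducing $B\pos0$ from $A\in\N[q]$ by a divisibility check, also fails. Divisibility of $[n]_qA$ by $[m]_q$ gives integrality, not nonnegativity.

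The fix is the paper's subset model. Take $m$-subsets $S\subseteq\{1,\ldots,n\}$ with weight $\sum_{t\in S}t-\binom{m+1}{2}$, and put $Y_d=\{S:S\mathbin\triangle\{1,\ldots,m\}\subseteq d\Z\}$. Such an $S$ is fixed at the nonmultiples of $d$ and chooses $m/d$ of the $n/d$ multiples. Its weight is therefore $d$ times the weight of an $(m/d)$-subset of $\{1,\ldots,n/d\}$, so $Y_d$ has generating function $\qb[q^d]{n/d}{m/d}$. The identity $Y_d\cap Y_e=Y_{\lcm(d,e)}$ is then immediate.

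In partition language, $Y_d$ consists of the partitions whose boundary word agrees with $1^m0^a$, the word of the empty partition, at every position not divisible by $d$. That is a different condition from all parts being divisible by $d$. With this model your inclusion--exclusion step goes through, and the rest of your argument completes the proof.
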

\begin{proof}
Put
\[
 B=-\sum_{\substack{d\mid g\\d>1}}\mu(d)\qb[q^d]{n/d}{m/d}.
\]
We again use the subset construction of Lemma~\ref{domination}, now with
$m$-subsets $S$ of $\{1,\ldots,n\}$ of weight
$\sum_{t\in S}t-\binom{m+1}{2}$. The sets
\[
 Y_d=\{S:S\mathbin\triangle\{1,\ldots,m\}\subseteq d\Z\}
 \qquad(d\mid g)
\]
have generating polynomials $\qb[q^d]{n/d}{m/d}$ and satisfy
$Y_d\cap Y_e=Y_{\lcm(d,e)}$. Inclusion--exclusion therefore expresses
$B$ as the generating polynomial of $\bigcup_{p\mid g}Y_p$, where $p$
runs over the prime divisors of $g$. In particular, $B\in\N[q]$.

Identity~\eqref{sym} gives $B=[n]_qA/[m]_q$, and hence
\[
 \frac{\qb nm-B}{[n]_q}=H+qB,
\]
since $[n+1]_q=1+q[n]_q$.
Theorem~\ref{correction} proves the assertion except when $n$ is even
and $m\in\{2,n-2\}$. In these cases $B=[n/2]_{q^2}$ and $H=-q^{n-1}$,
so the quotient is $q[n/2-1]_{q^2}\in\N[q]$.
\end{proof}

\section{Orbit polynomials of flags}\label{flags}
For a composition $\mathbf k=(k_1,\ldots,k_r)$ of $N$, write
\[
 \qb{N}{k_1,\ldots,k_r}
 =\frac{(q;q)_N}{\prod_{i=1}^r(q;q)_{k_i}}.
\]
We now prove the flag case of Conjecture~10.3 of Reiner, Stanton and
White~\cite{RSW}.

\begin{theorem}\label{flagpositivity}
Let $\mathbf k$ be a composition of $N$ and let $n$ be a positive divisor
of $N$. Then
\[
 \mathcal O_{n,\mathbf k}(q)
 =\frac1{[n]_q}
 \sum_{d\mid\gcd(n,k_1,\ldots,k_r)}
 \mu(d)\qb[q^d]{N/d}{k_1/d,\ldots,k_r/d}
 \in\N[q].
\]
\end{theorem}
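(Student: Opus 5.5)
We reduce the flag case to the subspace case, Corollary~\ref{singer}, by factoring the multinomial into a Gaussian coefficient times a multinomial with fewer parts. Put $g=\gcd(n,k_1,\ldots,k_r)$. If $r=1$ the sum is $\sum_{d\mid n}\mu(d)=[n=1]$, so the statement is trivial; assume $r\ge2$. Write $k'=k_2+\cdots+k_r$, so $N=k_1+k'$, and factor for each $d\mid g$:
\[
 \qb[q^d]{N/d}{k_1/d,\ldots,k_r/d}
 =\qb[q^d]{N/d}{k_1/d}\,\qb[q^d]{k'/d}{k_2/d,\ldots,k_r/d}.
\]
The difficulty is that $n$ divides $N$ but need not divide $k'$, and the M\"obius sum runs over $d\mid g$ rather than over $d\mid\gcd(n,k_1)$. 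The first step is a preliminary reduction. For a prime power $q$, $\mathcal O_{n,\mathbf k}$ counts free orbits of the cyclic group $\mathbb F_{q^n}^\times/\mathbb F_q^\times$ (or of a suitable subgroup of order $[n]_q$ acting on $\mathbb F_{q^N}$, viewed as $\mathbb F_{q^n}$-space) on flags. I will not rely on this, but it suggests organizing the sum over $d$ as an inclusion--exclusion, as in Lemma~\ref{domination}.

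Concretely, I would realize each term $\qb[q^d]{N/d}{\mathbf k/d}$ as the generating function of words (multiset permutations of $1^{k_1}\cdots r^{k_r}$, weighted by inversions) lying in a set $Y_d$. Here $Y_d$ consists of words that agree with the sorted word off multiples of $d$ in a prescribed sense, with $Y_d\cap Y_e=Y_{\lcm(d,e)}$. Inclusion--exclusion then writes
\[
 \sum_{d\mid g}\mu(d)\qb[q^d]{N/d}{\mathbf k/d}=M-B,\qquad M=\qb{N}{\mathbf k},\quad B\in\N[q],
\]
with $B$ counting $\bigcup_{p\mid g}Y_p$. Polynomiality of $\mathcal O_{n,\mathbf k}$ is known from~\cite{RSW}, or can be redone by the $q$-Lucas argument of step~1 in the proof of Theorem~\ref{correction}.

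For positivity I would induct on $r$ using the factorization above, splitting on the first part. Set $g_1=\gcd(n,k_1)$. Grouping $d\mid g$ inside $d\mid g_1$, write
\[
 \mathcal O_{n,\mathbf k}=\sum_{d\mid g}\mu(d)\,\frac{\qb[q^d]{N/d}{k_1/d}}{[n]_q}\,\qb[q^d]{k'/d}{\mathbf k'/d}.
\]
I would then try to express the inner multinomials through their own M\"obius-inverted pieces, namely orbit polynomials for $\mathbf k'$ with respect to divisors $e$ of $\gcd(k_2,\ldots,k_r)$, which are in $\N[q]$ by induction (with $[e]_q$ factors). This should yield a positive combination of products of $\mathcal O$-type quotients for the pair $(N,k_1)$ at various $d$ (these lie in $\N[q]$ by Corollary~\ref{singer} applied with $n\mid N$, after substituting $q\mapsto q^d$ and noting $[n]_q/[n/d]_{q^d}=[d]_q$) and positive polynomials. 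Only the exceptional shapes, where the $H$ of Theorem~\ref{correction} is negative, require separate handling.

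The main obstacle is that the Corollary controls $\qb Nm/[n]_q$ only when $n=N$, whereas here $n$ is a proper divisor of $N$. I would first prove a strengthened subspace statement: for $n\mid N$ and $0<m<N$, the quantity $[n]_q^{-1}\sum_{d\mid\gcd(n,m)}\mu(d)\qb[q^d]{N/d}{m/d}$ lies in $\N[q]$. I would attempt this by rerunning the proof of Theorem~\ref{correction}. The plan is to bound the M\"obius correction by Gaussian first differences via Lemmas~\ref{slopes} and~\ref{domination}, then divide the symmetric unimodal completion by $[n]_q$ using Lemma~\ref{division}, treating the small widths $m\in\{2,N-2\}$ by hand. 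The exceptional $-q^{n-1}$ terms should be absorbed by the positive surplus $qB$ exactly as in Corollary~\ref{singer}.
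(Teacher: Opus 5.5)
Your reduction to two parts can be made exact, but not quite as you wrote it. The inversion must run over $e\mid\gcd(n,k_1)$, not over divisors of $\gcd(k_2,\ldots,k_r)$. Möbius-inverting the inner multinomials over those $e$ gives
\[
 \mathcal O_{n,\mathbf k}(q)=\sum_{e\mid\gcd(n,k_1)}\mathcal O_{n/e,(k_1/e,\,k'/e)}(q^e)\,\mathcal O_{e,\mathbf k'}(q),
\]
with $e\mid k'$ automatic. By induction on $r$, this reduces the theorem to $r=2$ with $n$ an arbitrary divisor of $N$. That is a genuinely different organization from the paper's. The paper never isolates $r=2$; it treats all factors $\qb{N_i}{k_i}$ at once via $B\le_q\prod_iB_i$ and $\Delta(FG)\pos(\Delta F)(\Delta G)$. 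But Corollary~\ref{singer} covers only $n=N$, so everything now rests on your ``strengthened subspace statement''. That step is a real gap.

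If $\gcd(n,m)=\gcd(N,m)$, then simply $\mathcal O_{n,(m,N-m)}=[N/n]_{q^n}\,\mathcal O_{N,(m,N-m)}$. The new content is when $g'=\gcd(n,m)$ is a proper divisor of $\gcd(N,m)$, and there the framework of Theorem~\ref{correction} breaks down. Its division by $[m]_q$ rests on \eqref{sym}, that is, on $n=N$. For $(N,m,n)=(12,6,4)$, the analogue of the numerator of $H$ is $\qb{11}{5}-[13]_q\qb[q^2]{5}{2}$. At a primitive cube root of unity it equals $3-1=2$, so it is not divisible by $[6]_q$. You must instead divide $M-B$ by $[n]_q$ directly, where $M=\qb Nm$ and $B=-\sum_{d\mid g',d>1}\mu(d)\qb[q^d]{N/d}{m/d}$ is the \emph{full} correction. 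This needs $[q^s](1-q)M\ge[q^s]B$ for $s\le m(N-m)/2$, and that does not follow from Lemmas~\ref{slopes} and~\ref{domination}.

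Lemma~\ref{domination} concerns $A$, whose source rectangles are $(m/p-1)\times(a/p)$; the full Gaussians $\qb[q^p]{N/p}{m/p}$ have rectangles $(m/p)\times(a/p)$. For $(N,m,n)=(24,12,6)$ and $p=3$, the $4\times4$ source does not fit the $6\times3$ target left after the cost-six marker $3+3$. The $\Bcal$ bound can even fail outright: for $(a,b)=(6,6)$ one has $[q^{12}]B=3+1=4$, while $[q^{12}]\Bcal_{6,6}=3$, counting $6+6$, $6+4+2$ and $4+4+4$. The paper supplies the missing input in Lemma~\ref{fullslopes}. It uses cubic semi-invariants $c_j=[y^j]\Phi_3(y)$ of even weight as cost-three markers, namely $c_{p+3}$, and $x_0c_6f_6$ at $(6,6)$. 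Their independence comes from $\Phi_3(y)\Phi_3(-y)=\Phi_2(y)^2\Phi_2(2y)$. The paper then writes $M-B=P+qV$ and proves $[n]_q\mid V$ by reciprocity at the $n$th roots of unity, which your sketch also omits.
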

Here $N=an$ in the notation of~\cite{RSW}, and the quotient is the
free-orbit polynomial. For prime powers $q$, it counts the orbits of
size $[n]_q$ of
$\mathbb F_{q^n}^{\times}/\mathbb F_q^{\times}$ on flags in
$\mathbb F_{q^N}$ with successive dimension increments $k_1,\ldots,k_r$.
For $d\mid n$, the number of orbits with stabilizer
$\mathbb F_{q^d}^{\times}/\mathbb F_q^{\times}$ is
$\mathcal O_{n/d,\mathbf k/d}(q^d)$ if $d\mid\gcd(n,k_1,\ldots,k_r)$,
and zero otherwise~\cite[Proposition~9.2]{RSW}.
Their sum counts all orbits. Thus Theorem~\ref{flagpositivity} covers
every orbit size and every $a\ge1$.

The proof needs the following coefficient comparison for the full
Gaussian polynomial.

\begin{lemma}\label{fullslopes}
Let $a\ge b>0$, $g=\gcd(a,b)$, and put
\[
 T_d=\qb[q^d]{(a+b)/d}{b/d}\quad(d\mid g),\qquad
 B=-\sum_{\substack{d\mid g\\d>1}}\mu(d)T_d.
\]
Then $B\in\N[q]$ and
\[
 [q^s](1-q)T_1\ge[q^s]B\qquad(0\le s\le ab/2).
\]
\end{lemma}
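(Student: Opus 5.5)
We follow the two-step scheme of the subspace case: first show that $B$ is the generating polynomial of a union of subsets, then compare that union with the semi-invariant family of Lemma~\ref{slopes}.

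For $B\in\N[q]$ we reuse the subset model from the proof of Corollary~\ref{singer}, with $n=a+b$ and $m=b$. For $d\mid g$ put $Y_d=\{S:S\mathbin\triangle\{1,\ldots,b\}\subseteq d\Z\}$, where $S$ ranges over $b$-subsets of $\{1,\ldots,a+b\}$ with the shifted weight. These sets have generating polynomial $T_d$ and satisfy $Y_d\cap Y_e=Y_{\lcm(d,e)}$. Inclusion--exclusion then gives
\[
 B=\sum_{S\in\bigcup_{p\mid g}Y_p}q^{\operatorname{wt}(S)},
\]
so $0\pos B$. It also gives $B\le_q\sum_{p\mid g}T_p$ and $B-T_p\pos0$. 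Conjugating the rectangle picture, $T_d$ counts partitions with all parts in $\{d,2d,\ldots,b\}$ and at most $a/d$ parts.

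For the slope inequality we apply Lemma~\ref{slopes} with the pair $(a,b)$. It gives
\[
 [q^s](1-q)T_1\ge[q^s]\Bcal_{a,b}\qquad(0\le s\le ab/2).
\]
It therefore suffices to show $B\le_q\Bcal_{a,b}$ coefficientwise in the range $s\le ab/2$. We mirror the proof of~\eqref{dom}, with $m-1$ replaced by $b$; the largest allowed part of $\Bcal_{a,b}$ is now $b$ itself, which only helps. The argument splits into three cases.

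\emph{Low weights, $s\le a$.} The length constraints are vacuous, so the union $\bigcup_p Y_p$ is the set of partitions of $s$ whose parts are all divisible by some prime $p\mid g$. Every such partition uses parts in $\{2,\ldots,b\}$ and has cost at most $s\le a$, so it is counted by $\Bcal_{a,b}$.

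\emph{Prime powers, $g=p^r$.} Here $B=T_p$. Its partitions have at most $a/p$ parts, each of cost at most $3\le p$ when $p$ is odd and cost $2=p$ when $p=2$. The total cost is therefore at most $a$, so $B$ injects into $\Bcal_{a,b}$ at every weight.

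\emph{Several primes, $s>a$.} We build disjoint classes inside $\Bcal_{a,b}$ as before:
\begin{itemize}
\item all-even partitions for $p=2$, which count $T_2$ exactly;
\item for each odd $p\mid g$ with $h=b/p\ge2$, partitions with exactly one odd part $p$ when $s$ is odd, and exactly two odd parts $3,p$ when $s$ is even.
\end{itemize}
If $h=1$, then $\deg T_p=a<s$ and there is nothing to prove. Otherwise the source coefficient is a Gaussian coefficient in an $h\times(a/p)$ rectangle. The target is a Gaussian coefficient in an $E'\times\ell_\varepsilon$ rectangle with $E'=\lfloor b/2\rfloor$. We compare them by rectangle inclusion together with symmetry and unimodality, as in~\eqref{fit}.

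This final verification is the main obstacle. With $h=b/p$ in place of $m/p-1$, the source rectangle is slightly larger, while $E'\ge\lfloor(m-1)/2\rfloor$ is also slightly larger. We need to recheck two conditions:
\[
 u\le v,\qquad \eta=2E'\ell_\varepsilon-ha+w_\varepsilon\ge0,
\]
in the smallest cases. For $p=3$ with $b=a=6$ or $9$, $g$ is a prime power and has already been handled. For $p\ge5$, the bound $b\ge15$ should still give $\eta>0$ by the same linear estimate. For $p=3$, the analogue of the explicit quadratic in $b$ should be nonnegative once $b\ge12$ or $15$; any leftover small cases would be checked directly. Finally, the range restriction $s\le ab/2$ is exactly what lets us replace $s$ by $\min(s,\deg T_p-s)$ on the source side.
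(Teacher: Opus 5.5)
Your reduction to $B\le_q\Bcal_{a,b}$ in the several-prime case fails, because that inequality is false at $(a,b)=(6,6)$. You set this case aside as a prime-power case, but $g=\gcd(6,6)=6$.

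Here $B=T_2+T_3-T_6$ with $T_2=\qb[q^2]{6}{3}$, $T_3=\qb[q^3]{4}{2}=1+q^3+2q^6+q^9+q^{12}$ and $T_6=1+q^6$. So $[q^{12}]B=3+1=4$, and $12\le ab/2=18$ lies in the required range. At weight $12$ and cost at most $6$, the only partitions counted by $\Bcal_{6,6}$ are $6+6$, $6+4+2$ and $4+4+4$. An even weight needs an even number of odd parts, and two odd parts cost $6$ but weigh at most $10$. Thus $[q^{12}]\Bcal_{6,6}=3<4$. The lemma itself holds here, since $[q^{12}](1-q)\qb{12}{6}=39-32=7$. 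So the problem is that Lemma~\ref{slopes} alone is too weak.

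The same cost loss affects your marker classes in other small cases. Two odd parts $3,p$ cost $6$ and leave only $\lfloor(a-6)/2\rfloor$ even parts. For $p=3$ the rectangle inclusion in~\eqref{fit} then fails at $(a,b)=(12,6)$, $(12,12)$ and $(15,15)$. At $(12,12)$ with $s=36$ the count itself fails: your $p=3$ class has $3$ members, while $[q^{36}]T_3=[z^{12}]\qb[z]{8}{4}=5$. Also, for $p\ge5$ the smallest relevant case is $b=2p$, for example $(10,10)$, not $b\ge15$.

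The missing idea is to enlarge the semi-invariant family beyond that of Lemma~\ref{slopes}. The paper uses all cubic coefficients $c_j=[y^j]\Phi_3(y)$, including even $j$, and takes a single degree-three marker: $c_p$ at odd weight and $c_{p+3}$ at even weight. Independence from the even $f_j$ comes from $\Phi_3(y)\Phi_3(-y)=\Phi_2(y)^2\Phi_2(2y)$. This identity gives $c_{p+3}=c_3c_p/x_0^3+R_p$ for $p>3$ and $c_6=c_3^2/(2x_0^3)+R_3$. With this marker both parities allow $\lfloor(a-3)/2\rfloor$ even parts, and the rectangle fit and $\eta\ge0$ hold except at $(6,6)$. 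In that case the extra semi-invariants $x_0c_3f_6$ and $x_0c_6f_6$ supply exactly the missing counts at weights $9$ and $12$.

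Without such an extension, or a direct computation of the true first differences in the small exceptional cases, your argument does not close. Finally, the restriction $s\le ab/2$ is needed for Sylvester's theorem. It is not what justifies replacing $s$ by $\min(s,\deg T_p-s)$, since the symmetry of $T_p$ holds at every weight.
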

\begin{proof}
The subset union in Corollary~\ref{singer} gives
$0\le_q B\le_q\sum_{p\mid g}T_p$, with $p$ prime.
By Lemma~\ref{slopes}, the first differences are bounded below by
$\Bcal_{a,b}$. Since $T_d$ counts partitions with parts $d,2d,\ldots,b$
and at most $a/d$ parts, the low-weight and prime-power arguments of
Lemma~\ref{domination} give $B\le_q\Bcal_{a,b}$ for $s\le a$, and for
all $s$ if $g$ is a prime power. The case $g=1$ is immediate.

Suppose $g$ has at least two distinct prime factors and $s>a$.
We bound the prime sum by constructing independent semi-invariants.
Put $c_j=[y^j]\Phi_3(y)$, as in Lemma~\ref{slopes}. For
$2\le j\le b$, $c_j$ is a cubic semi-invariant of weight $j$, and
$c_j=f_j$ when $j$ is odd. If $2\mid g$, the even $f_j$ give monomials counted
by $T_2$, after padding with $x_0$ to degree $a$. For each odd prime
$p\mid g$, multiply monomials in the even $f_j$ by $c_p$ for odd $s$
and by $c_{p+3}$ for even $s$, then pad to degree $a$. These markers are
available since $b\ge2p\ge p+3$.

To prove independence, use
\[
 \Phi_3(y)\Phi_3(-y)=\Phi_2(y)^2\Phi_2(2y).
\]
Since $c_0=x_0^3$ and $c_1=0$, coefficient comparison expresses each
even $c_k$ recursively in the even $f_j$ and odd $c_j$ with $j\le k-3$,
over $\Q(x_0)$. In particular,
\[
 c_6=\frac{c_3^2}{2x_0^3}+R_3,\qquad
 c_{p+3}=\frac{c_3c_p}{x_0^3}+R_p\quad(p>3),
\]
where $R_3$ involves only even generators, and $R_p$ involves even
generators and odd $c_j$ with $j<p$.
By Lemma~\ref{slopes}, the even $f_j$ and odd $c_j$ are algebraically
independent. At even weight, compare the largest prime first, then
$c_3^2$ if $3\mid g$, and finally the all-even family. At odd weight,
compare the distinct $c_p$. The even monomials within each family are
also independent. By Sylvester's formula, it remains to count them.

Each odd-prime marker has degree three, so its family contributes
\[
 [z^v]\qb[z]{E+\ell}{E},\qquad
 E=\lfloor b/2\rfloor,\quad \ell=\lfloor(a-3)/2\rfloor,
 \quad v=(s-w)/2,
\]
where $w=p$ or $p+3$ according as $s$ is odd or even.
For a nonzero coefficient of $T_p$, put $h=b/p$ and
$u=\min(s,ha-s)/p$. By the comparison in Lemma~\ref{domination},
it suffices to show that
\[
 h\times(a/p)\text{ fits in }E\times\ell,
 \qquad 0\le u\le v\le E\ell-u.
\]

Except when $(a,b)=(6,6)$, we have $(p-2)a\ge4p$: this follows from
$a\ge b\ge2p$ for $p\ge5$, while $p=3$ forces $a\ge12$.
Thus $h\le E$ and $\ell\ge(a-4)/2\ge a/p$, so the rectangles fit.
Since $s>a$ and $p\mid s$,
\[
 (p-2)\frac{s}{p}\ge(p-2)\left(\frac ap+1\right)\ge p+2.
\]
When $s$ is even the left side is even, hence at least $p+3$;
in either parity it is at least $w$. Consequently $u\le s/p\le v$.
Also $u+v\le(ha-w)/2$, so the remaining bound follows from
$\eta_w:=2E\ell-ab/p+w\ge0$. If $b$ is even, then
\[
 \eta_w=b(\ell-a/p)+w\ge w>0.
\]
If $b$ is odd, the two distinct common primes force $a\ge b\ge15$, and
\[
 \eta_w\ge\frac{(b-1)(a-4)}2-\frac{ab}{3}+3
 =\frac{(a-12)(b-3)}6-1\ge5.
\]

For $(a,b)=(6,6)$, only the count needs a separate check. Above weight
$6$, $T_3=\qb[q^3]{4}{2}$ contributes once at weights $9$ and $12$.
The same construction gives $x_0c_3f_6$ and $x_0c_6f_6$, respectively,
both of degree six and independent of the all-even family. Thus the
marked families dominate the prime sum for every $s>a$.
\end{proof}

\begin{proof}[Proof of Theorem~\ref{flagpositivity}]
By~\cite[Corollary~10.2]{RSW}, the quotient lies in $\Z[q]$.
If $r=1$, the quotient is $1$ for $n=1$ and $0$ otherwise.
Assume $r\ge2$, and put
\[
\begin{aligned}
 M&=\qb{N}{k_1,\ldots,k_r},\qquad D=\sum_{i<j}k_i k_j,\\
 g&=\gcd(n,k_1,\ldots,k_r),\qquad
 B=-\sum_{\substack{d\mid g\\d>1}}\mu(d)
       \qb[q^d]{N/d}{k_1/d,\ldots,k_r/d}.
\end{aligned}
\]
Then $\mathcal O_{n,\mathbf k}=(M-B)/[n]_q$.

First factor $M=\prod_{i=1}^{r-1}G_i$, where
$N_i=k_i+\cdots+k_r$ and $G_i=\qb{N_i}{k_i}$.
Let $B_i$ be its correction from Lemma~\ref{fullslopes}, using all
primes dividing $\gcd(N_i,k_i)$. By Gaussian symmetry we may take the
smaller rectangle side as the width.
Apply the weighted $k_i$-subset construction of Corollary~\ref{singer}
to each factor, with sets $Y_{i,d}$.
For $d\mid g$, the product $Y_d=\prod_iY_{i,d}$ has generating
polynomial $\qb[q^d]{N/d}{k_1/d,\ldots,k_r/d}$ and satisfies
$Y_d\cap Y_e=Y_{\lcm(d,e)}$.
Thus $B$ counts $\bigcup_{p\mid g}\prod_iY_{i,p}$, which is contained in
$\prod_i\bigcup_{p\mid\gcd(N_i,k_i)}Y_{i,p}$, whence
\[
 0\le_q B\le_q\prod_i B_i.
\]
Also $\deg B\le D/2$ when $B\ne0$, since each term with $d>1$ has
degree $D/d$.

For a nonnegative symmetric unimodal polynomial $F$, with
$F(q)=q^LF(q^{-1})$, put
\[
 \Delta F=\sum_{j=0}^{\lfloor L/2\rfloor}
             \bigl([q^j](1-q)F\bigr)q^j.
\]
The symmetric interval expansion and
\[
 [u]_q[v]_q=\sum_{t=0}^{\min(u,v)-1}q^t[u+v-1-2t]_q
 \qquad(u,v\ge1)
\]
give $\Delta(FG)\pos(\Delta F)(\Delta G)$ for two such polynomials
$F,G$, by retaining the interval with $t=0$ in each product.
Since $M$ has symmetry degree $D$, Lemma~\ref{fullslopes} gives
\[
 \Delta M\pos\prod_i\Delta G_i\pos\prod_iB_i\pos B.
\]

To apply Lemma~\ref{division}, write $\delta_j=[q^j](1-q)M$ and
$b_j=[q^j]B$, and put
\[
\begin{aligned}
 P&=\sum_{j=0}^{\lfloor D/2\rfloor}
       (\delta_j-b_j)q^j[D+1-2j]_q,\\
 V&=\sum_{j=0}^{\lfloor D/2\rfloor}b_jq^j[D-2j]_q.
\end{aligned}
\]
Both are nonnegative, symmetric and unimodal; $V=0$ is allowed.
The interval expansions give
\[
 V=\frac{B(q)-q^DB(q^{-1})}{1-q},\qquad M-B=P+qV.
\]
Since $(M-B)/[n]_q$ is a polynomial, $B(\zeta)=M(\zeta)$ at every
nontrivial $n$th root of unity $\zeta$. By reciprocity,
\[
 B(\zeta)=M(\zeta)=\zeta^DM(\zeta^{-1})
                         =\zeta^DB(\zeta^{-1}),
\]
so $V(\zeta)=0$. Hence $V/[n]_q\in\Z[q]$, and
$P/[n]_q=\mathcal O_{n,\mathbf k}-qV/[n]_q$ is also an integral polynomial.
Lemma~\ref{division} makes both quotients nonnegative, and
$\mathcal O_{n,\mathbf k}=P/[n]_q+qV/[n]_q$ proves the theorem.
\end{proof}

\section{Remarks and questions}\label{furtherremarks}
Theorem~\ref{thm} leaves several combinatorial questions. We first compare
its product formula with other positivity results.
Gatzweiler and Krattenthaler~\cite{GatzweilerKrattenthaler} study quotients
of Gaussian polynomials with a common top argument (Conjecture~1), and
products $\prod_i(1-q^{r+i})^{a_i}/(1-q^i)^{a_i}$ with a symmetric
exponent sequence (Conjecture~3). The latter extend Stanton's fake
Gaussian sequences~\cite{StantonFake}.
In $R_{n,m}$ the factor $[m]_{q^{n+1}}$ dilates the exponents of $[m]_q$;
in those products a common shift translates the degrees. We do not
identify $R_{n,m}$ as a specialization of either family.
For cyclotomic generating functions more generally, see
Billey and Swanson~\cite{BilleySwanson} and Gatzweiler,
Levic\'an-Santib\'a\~nez and Yoshida~\cite{GLY}.

There is a simple obstruction to applying the regular-sequence criterion
of~\cite[Proposition~3.3]{GLY} to the product
\[
 R_{n,m}=\frac{(1-q^{mC})\prod_{j=n-m+1}^{n-1}(1-q^j)}
 {(1-q^C)\prod_{j=2}^{m}(1-q^j)},\qquad C=n+1
\]
for $m\ge2$. The criterion requires every selection of denominator
degrees to generate an additive monoid containing at least as many
numerator degrees, counted with multiplicity.
Suppose $g=\gcd(n,m)>1$. Select the $m/g+1$ denominator degrees
$C,g,2g,\ldots,m$. They generate $\langle g,C\rangle$.
Below $C$, this monoid contains only multiples of $g$, so it contains
exactly $m/g-1$ degrees from the numerator interval and also the degree
$mC$. There are only $m/g$ such numerator degrees.
Thus the criterion fails whenever $g>1$, even in positive cases such as
$(n,m)=(6,3)$. This concerns the displayed product; another Hilbert-series
realization may still be possible.

At $q=1$ there is an elementary necklace interpretation.
Let $\mathcal N_{n,m}^{\rm prim}$ be the binary necklaces of length $n$
with $m$ ones and least period $n$. M\"obius inversion on repetitions,
followed by division by the orbit size $n$, gives
\[
 \#\mathcal N_{n,m}^{\rm prim}
 =\frac1n\sum_{d\mid g}\mu(d)\binom{n/d}{m/d},\qquad
 G(1)-A(1)=m\,\#\mathcal N_{n,m}^{\rm prim}.
\]
The second equality uses
$\binom{n/d-1}{m/d-1}=(m/n)\binom{n/d}{m/d}$.
Lemma~\ref{domination} interprets $G-A$ at general $q$ as the weight
enumerator of subsets outside $\bigcup_{p\mid g}X_p$.
It would be interesting to find a cyclic action and a compatible statistic
giving a cyclic-sieving interpretation of these polynomials.

A direct proof of the original conjecture would give an injection between
successive partition sets. Let $\mathcal P_{n,m}(N)$ be the set counted by
$a_{n,m}(N)$.
\begin{problem}\label{injectionproblem}
For each nonexceptional pair $0<m<n$, give a uniform combinatorial rule
for an injection from $\mathcal P_{n,m}(N)$ into
$\mathcal P_{n,m}(N+1)$ for every $N\ge0$.
\end{problem}
The polynomial version suggests a second question.
\begin{problem}\label{statisticproblem}
Find a natural nonnegative integer statistic on compositions of $n$ into
$m$ positive parts such that, for every nonexceptional pair $0<m<n$,
\[
 R_{n,m}(q)=
 \sum_{\substack{a_1+\cdots+a_m=n\\a_1,\ldots,a_m\ge1}}
 q^{\operatorname{stat}(a_1,\ldots,a_m)}.
\]
\end{problem}
Since $R_{n,m}(1)=\binom{n-1}{m-1}$, Theorem~\ref{thm} permits an
arbitrary assignment of exponents to compositions. We ask for a direct
combinatorial rule. When $m\ge2$, one could then use~\eqref{RF} to count
$a_{n,m}(N)-a_{n,m}(N-1)$ by adjoining a partition with parts in
$\{m(n+1)\}\cup\{n+2,\ldots,n+m-1\}$, the latter set being empty for $m=2$.

There is such a statistic when $n,m$ are coprime. The rational shuffle
theorem~\cite{Mellit} and the specialization in
Garsia--Leven--Wallach--Xin~\cite[identity~(I.1)]{GLWX} give
\[
 \frac{1}{[m]_q}\qb{n-1}{m-1}
 =\sum_{D\in\mathcal D_{m,n-m}}q^{\operatorname{coarea}(D)+\operatorname{dinv}(D)}.
\]
Here $\mathcal D_{m,n-m}$ consists of Dyck paths above the diagonal in the
$m\times(n-m)$ rectangle, with the statistics of~\cite{GLWX}.
Given a composition $(a_1,\ldots,a_m)$, the cycle lemma gives a unique
cyclic rotation of $(a_1-1,\ldots,a_m-1)$ encoding the numbers of north
steps before successive east steps of a path $D$. Let
$j\in\{0,\ldots,m-1\}$ count the left cyclic rotations taking this
encoding back to the original tuple. Then
\[
 \operatorname{stat}(a_1,\ldots,a_m)
 =(n+1)j+\operatorname{coarea}(D)+\operatorname{dinv}(D)
\]
has generating polynomial $R_{n,m}$. The remaining question is to find
a rule for the noncoprime pairs.

\section*{Declaration of generative AI and AI-assisted technologies in the manuscript preparation process}
During the preparation of this work, I used OpenAI ChatGPT to explore
mathematical arguments and to assist in drafting and editing parts of the
manuscript. I reviewed and revised the resulting material and take full
responsibility for the content of this article.

\end{document}